\theoremstyle{plain}
\pgfplotsset{width=10cm,compat=1.9}
\theoremstyle{plain}
\newtheorem{lem}{Lemma}[section]
\newtheorem{lemma}[lem]{Lemma}
\newtheorem{proposition}[lem]{Proposition}
\newtheorem{thm-n}[lem]{Theorem}
\theoremstyle{remark}
\newtheorem*{rem*}{Remark}
\newtheorem*{notat*}{Notation}
\newtheorem*{exm*}{Example}
\theoremstyle{definition}
\newcommand{\eilidh}[1]{\textcolor{red}{#1}}
\begin{document}

\title[Z{\'e}mor hash preimages]{Preimages for Z{\'e}mor's Cayley hash function}

\author{Eilidh McKemmie}
\address{Department of Mathematical Sciences, Kean University, Union, NJ, USA.}
\email{eilidh.mckemmie@kean.edu}
\author{Amol Srivastava}
\address{Department of Mathematics, Rutgers University, Piscataway, NJ, USA.}
\email{as3655@scarletmail.rutgers.edu}%

\begin{abstract}

%We extend Tillich and Zémor’s collision attack to a preimage attack on Zémor’s Cayley Hash Function, which is stronger. We show that given the matrices in $SL(F_p)$ it is possible to factor them using Euclid's division algorithm. 

In 1991, Z{\'e}mor proposed a hash function which provides data security using the difficulty of writing a given matrix as a product of generator matrices. Tillich and Z{\'e}mor subsequently provided an algorithm finding short collisions for this hash function.

We extend this collision attack to a stronger preimage attack, under the assumption that we can factor large integers efficiently. The Euclidean algorithm will factor a $2\times 2$ matrix with non-negative integer entries and determinant $1$. This factorization is short if the matrix entries are all roughly the same size. Therefore, to factor a matrix we need only find an integer matrix with the listed properties which is congruent to the target matrix modulo $p$; finding such an integer matrix is equivalent to solving a Diophantine equation. We give an algorithm to solve this equation.

\end{abstract}

\maketitle

\section{Introduction}
A hash function is a function that takes in a message of any size and produces a fixed-size string called a hash. Such functions with nice security properties are called \textbf{cryptographic hash functions}. These functions are used for a wide variety of cryptographic applications, for example password verification, digital signatures, and proof-of-work systems.

Desirable security properties include \textbf{collision resistance}, which means it is computationally infeasible to find two inputs with the same hashed output, and \textbf{preimage resistance}, which means it is computationally infeasible to find a message with a given hash.

A \textbf{Cayley hash function} is determined by a finite group $G$ and a set of group elements $g_1, ..., g_n \in G$. The input is a string with letters in $\{1, ..., n\}$, and the function takes the product of the corresponding group elements in order, mapping an arbitrary string into the finite set $G$ as follows \begin{align*}h:\{1, ..., n\}^* &\to G\\
h: (a_1, a_2, ..., a_m)&\mapsto g_{a_1}g_{a_2}\cdots g_{a_m}.\end{align*}

The name comes from the \textbf{Cayley graph} whose vertices are elements of the group $G$, with edges $s\to g_is$ corresponding to the set of generators. Security properties can be interpreted as graph-theoretic properties. For example, collision resistance of a Cayley hash function means it is computationally infeasible to find a cycle in the Cayley graph, while preimage resistance means it is computationally infeasible to find a path in the Cayley graph from the identity to a given group element.

One of the early Cayley hash function proposals came from Z{\'e}mor \cite{Zemor} using the group $SL_2(p)$ of $2\times 2$ matrices of determinant $1$ with entries modulo a prime $p$, inspired by Margulis' suggestion that the Cayley graph of $SL_2(p)$ might have good cryptographic properties \cite{Margulis_1982}.

Cayley hash functions have generated quite a bit of interest, leading to many proposals and attacks, for example \cite{Bromberg_2015,coz2022,Faugere2011,Grassl_2010,Petit_2008,Petit_2009,Petit_2011,Petit_2012,Petit_2013,Petit2016,Tillich,Tillich_2008,Tillich1994,tinani2023methods,Zemor,Zemor_1994}. As far as we are aware, the only unbroken proposals are \cite{Bromberg_2015,coz2022}.

Tillich and Z{\'e}mor \cite{Tillich1994} showed that Z{\'e}mor's Cayley hash function \cite{Zemor} is not collision resistant. We will show that, if we assume we can efficiently factor large integers, then it is not preimage resistant either. In particular, this says that Z{\'e}mor's Cayley hash function is not post-quantum preimage resistant.

We used GAP \cite{GAP4} to implement our algorithm, and our code is included in an ancillary file in the arXiv version of this paper.

%Cayley hash functions were originally proposed by Tillich and Zemor in their 1991 paper.

%Preimage resistance is a property that a hash function possesses such that given some output, it is impossible to work back and find the input. Collision resistance is another hash function property such that no two \textit{different} inputs lead to the same output.

\section{Preliminaries}

Z{\'e}mor's Cayley hash function \cite{Zemor} uses two generators, \[A=\begin{pmatrix}
    1&1\\0&1
\end{pmatrix}, \qquad B=\begin{pmatrix}
    1&0\\1&1
\end{pmatrix},\] chosen because they make computations efficient and generate all of $SL_2(p)$ for any prime $p$.  A \textbf{word} in $A$ and $B$ is a string of $A$s and $B$s, and the \textbf{product} of the word is simply the product of the matrices in order. Breaking collision and preimage resistance requires providing efficient algorithms to solve the following word problems.
\begin{itemize}
    \item Collision resistance: given a prime $p$, find a short word in $A$ and $B$ whose product is the identity matrix.
    \item Preimage resistance: given a prime $p$ and a matrix $M\in SL_2(p)$, find a short word in $A$ and $B$ whose product is $M$.
\end{itemize}

Tillich and Z{\'e}mor \cite{Tillich1994} gave an algorithm which efficiently factors the identity as a product of length $O(\log p)$. Their algorithm first solves a Diophantine equation in four variables $k_1, k_2, k_3, k_4$ to find a matrix with non-negative integer entries and determinant $1$ of the form \[\begin{pmatrix}
    1+k_1p&k_2p\\k_3p&1+k_4p
\end{pmatrix}\in SL(\mathbb{Z}).\]

Then they use the following proposition to factor the identity in $SL_2(p)$.

\begin{proposition}[{\cite[Theorem~4]{Tillich1994}}]\label{prop: Euclid}
    Let $M\in SL_2(\mathbb{Z})$ be chosen uniformly at random from the set of matrices with coefficients in the box $\{1, ..., N\}$.
    
    The Euclidean algorithm can be used to factor $M$ as a product of $A$ and $B$ in $O(\log N)$ time. This factorization has length $O(\log N)$ almost surely as $N\to \infty$.
\end{proposition}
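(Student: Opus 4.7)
The plan is to exhibit the factorization explicitly by Euclidean-style row reduction on $M$, and then analyze the running time and word length separately.

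For the reduction, observe that given $M = \begin{pmatrix} a & b \\ c & d \end{pmatrix} \in SL_2(\mathbb Z_{\ge 0})$ with $ad - bc = 1$, left multiplication by $A^{-1}$ subtracts row~$2$ from row~$1$ while left multiplication by $B^{-1}$ subtracts row~$1$ from row~$2$. A short calculation using $\det M = 1$ shows that if $a > c$ then $b \ge d$, so one may peel off a maximal block $A^q$ on the left (a Euclidean step) while keeping the residue in $SL_2(\mathbb Z_{\ge 0})$; symmetrically if $c > a$. Because $\gcd(a,c) = 1$, the descent terminates at the identity, and the sequence of exponents $q_1, q_2, \ldots$ extracted gives a positive word in $A, B$ of length $\sum_i q_i$ whose product is $M$.

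The running-time bound is then immediate from Lamé's theorem: the Euclidean algorithm on entries of size at most $N$ uses $O(\log N)$ divisions, each a single arithmetic operation, for a total of $O(\log N)$ work.

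For the word length $\sum_i q_i$, the $q_i$ are the partial quotients in a continued-fraction expansion associated with the entries of $M$. For $M$ drawn uniformly from the box, these partial quotients follow the Gauss--Kuzmin distribution asymptotically, for which $\Pr(q_i \ge T) \lesssim 1/T$, while the number of partial quotients concentrates at $\tfrac{12\log 2}{\pi^2}\log N$. A union bound over the $O(\log N)$ partial quotients excludes, with density $o(1)$, the event that any $q_i$ exceeds a slowly growing function of $N$, yielding $\sum_i q_i = O(\log N)$ for almost every such $M$.

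The main obstacle is this last step: because the Gauss--Kuzmin distribution has infinite mean, the sum $\sum_i q_i$ is not well behaved on average, so the ``almost surely'' assertion requires precise tail estimates on the individual partial quotients and a Borel--Cantelli-type argument to exclude a small set of exceptional $M$. The matrix algebra and the Euclidean analysis underlying the reduction are routine by comparison.
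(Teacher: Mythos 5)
The paper does not actually prove this proposition; it is cited verbatim from Tillich and Z\'emor \cite[Theorem~4]{Tillich1994}, and the only thing the paper adds is a one-line worked example showing how the Euclidean algorithm on a row of $M$ produces the exponent sequence. So your job here was really to reconstruct Tillich--Z\'emor's argument from scratch, and you have correctly isolated the three ingredients: the Euclidean descent in $SL_2(\mathbb Z_{\ge 0})$, Lam\'e's bound for the running time, and a probabilistic tail estimate for the word length.

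The descent and the time bound are fine. The observation that $\det M = 1$ forces the two columns (or rows) to be ordered the same way, so that peeling off $A^{\lfloor a/c\rfloor}$ (resp.\ $B^{\lfloor c/a\rfloor}$) keeps all entries non-negative, is exactly the right reason the reduction stays inside the positive monoid, and Lam\'e gives $O(\log N)$ division steps.

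Where the argument genuinely breaks is the word-length estimate, and you have correctly flagged it yourself but not repaired it. The union-bound-plus-truncation sketch cannot give $O(\log N)$: if you condition on every partial quotient being $\le f(N)$ for a slowly growing $f$, the best you can say about the sum is $\sum_i q_i = O(\log N \cdot f(N))$, and since the Gauss--Kuzmin law has an infinite mean you cannot take $f$ bounded. Even sharpening this via Markov on the truncated sum gives $E\bigl[\sum_i q_i \,\mathbf 1_{q_i\le T}\bigr] \asymp (\log N)(\log T)$, so the cutoff $T$ needed to make the exceptional set small always contributes an extra factor of $\log T$. Your Borel--Cantelli remark doesn't help either, since the ambient measure changes with $N$ (you are sampling from a box of side $N$, not a fixed probability space), so there is no sequence of events on a common space to apply it to. To actually land on $O(\log N)$ for a $1-o(1)$ proportion of matrices you would need an argument that exploits the arithmetic constraint $K(q_1,\dots,q_m)\le N$ on the continuant, rather than treating the $q_i$ as independent Gauss--Kuzmin samples and union-bounding; this is the nontrivial content of \cite[Theorem~4]{Tillich1994} and it is missing from your sketch.
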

For example, if $a>b>d\ge 0$ and $a>c\ge 0$ with $ad-bc=1$ then applying the Euclidean algorithm to $(a, b)$ yields quotients $q_1, q_2, ..., q_n$ such that \[A^{q_n}B^{q_{n-1}}\cdots A^{q_2}B^{q_1}=\begin{pmatrix}
    a&b\\c&d
\end{pmatrix},\] and other cases are similar.
 
The logarithmic length of the words  relies on the fact that the entries in their integer matrix are of roughly the same magnitude. A naive extension of their method to find a matrix of the form \[\begin{pmatrix}
    a+k_1p&b+k_2p\\c+k_3p&d+k_4p
\end{pmatrix}\] gives $k_1, k_2, k_3$ and $k_4$ of vastly different magnitudes when any one of $a, b, c$ or $d$ is large. This leads to exponentially long words, making this method infeasible. In the following section we describe a new method for factoring these matrices with word length $O((\log p)^2)$.

\section{Preimages}
Tillich and Z{\'e}mor provide a probabilistic algorithm for writing the identity matrix as a word of length $O(\log p)$ in $A$ and $B$, thereby showing that Z{\'e}mor's hash function is not collision resistant. This algorithm may also be used to write $A^{-1}$ and $B^{-1}$ as word of length $O(\log(p))$ in $A$ and $B$: if we have the identity as a word beginning with $A$ or $B$, then we simply delete the first letter to factor $A^{-1}$ or $B^{-1}$. Once we have factored one of $A^{-1}$ or $B^{-1}$, we factor the other using $B^{-1}=A^{-T}$ or $A^{-1}=B^{-T}$.

We will show that, in a post-quantum world, Z{\'e}mor's hash function is not preimage resistant. In particular, we will give an algorithm that, under the assumption that we can factor large integers, efficiently writes any matrix $M\in SL_2(p)$ as a word of length $O((\log p)^2)$ in $A$ and $B$ for any prime $p$.

Our algorithm will write $M$ as a word of length $O(\log p)$ in $A, B, A^{-1}$ and $B^{-1}$, then use Tillich and Z{\'e}mor's factorization of $A^{-1}, B^{-1}$ to get the final result.

Given the matrix $M$ to factor, the first step is to find the $LU$ decomposition to decompose $M$ into easily factorable parts $M=PLDU$ where $P$ is a permutation matrix, $L$ is lower unitriangular, $D$ is diagonal, and $U$ is upper unitriangular. 

The decomposition is as follows:
\[\begin{pmatrix}
    a&b\\c&d
\end{pmatrix}=\begin{cases}
    \begin{pmatrix}
        1&0\\ca^{-1} & 1
    \end{pmatrix}\begin{pmatrix}
        a&0\\0&a^{-1}
    \end{pmatrix}\begin{pmatrix}
        1&a^{-1}b\\0&1
    \end{pmatrix} & \text{when }a\ne 0\\
    \begin{pmatrix}
        0&1\\-1&0
    \end{pmatrix}\begin{pmatrix}
        -c&0\\0&-c^{-1}
    \end{pmatrix}\begin{pmatrix}
        1&c^{-1}d\\0&1
    \end{pmatrix} & \text{when }a=0
\end{cases}.\]

\subsection{Reduction to diagonal matrices}
We will reduce the general factorization problem to factoring a diagonal matrix, for which we will give an algorithm in the next section.

\begin{proposition}\label{prop: reduce to diagonal}
    The matrix $\begin{pmatrix}
        0&1\\-1&0
    \end{pmatrix}$ and any unitriangular matrix in $SL_2(p)$ can be factored as a word of length at most $6$ in $A, B, A^{-1}, B^{-1}$ and diagonal matrices.
\end{proposition}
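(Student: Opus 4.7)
The plan is to treat the three kinds of matrices separately. For the permutation matrix $\begin{pmatrix} 0 & 1 \\ -1 & 0 \end{pmatrix}$, a direct matrix computation gives the identity $AB^{-1}A = \begin{pmatrix} 0 & 1 \\ -1 & 0 \end{pmatrix}$, a word of length $3$ (with no diagonal letters needed at all). This handles the first of the two stated matrices outright.

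For the unitriangular cases the key observation will be that conjugating $A$ or $B$ by a diagonal element of $SL_2(p)$ rescales the off-diagonal entry by a square. Writing $D_d := \begin{pmatrix} d & 0 \\ 0 & d^{-1} \end{pmatrix}$, one checks
\[
D_d A D_d^{-1} = \begin{pmatrix} 1 & d^2 \\ 0 & 1 \end{pmatrix}, \qquad D_d B D_d^{-1} = \begin{pmatrix} 1 & 0 \\ d^{-2} & 1 \end{pmatrix}.
\]
Multiplying two such conjugates of $A$ then realizes $\begin{pmatrix} 1 & s^2 + t^2 \\ 0 & 1 \end{pmatrix}$ as the length-$6$ word $D_s\, A\, D_s^{-1}\, D_t\, A\, D_t^{-1}$, and the lower unitriangular case is symmetric, using conjugates of $B$. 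Hence the proposition reduces to the claim that every element $b \in \mathbb{F}_p$ can be written as a sum of two squares (equivalently, since inversion is a bijection on nonzero squares, as a sum of two inverse squares).

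The main content is then this sum-of-two-squares claim, which I would establish by the standard pigeonhole argument: for odd $p$, the sets $\{x^2 : x \in \mathbb{F}_p\}$ and $\{b - y^2 : y \in \mathbb{F}_p\}$ each contain $(p+1)/2$ elements of $\mathbb{F}_p$, so they must intersect, yielding $b = x^2 + y^2$; the case $p = 2$ is trivial. In the degenerate subcases where $b = 0$ or $b$ is itself a nonzero square, only a single diagonal conjugation is needed and the factorization has length at most $3$, still within the claimed bound of $6$.

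I do not anticipate any serious obstacle. The permutation identity is a three-line matrix check, the conjugation formulas are immediate, and the pigeonhole counting is a textbook step. The only thing requiring any care is making sure the degenerate cases (one of $s, t$ equal to zero, or $b = 0$) fit under the uniform bound $6$, which is immediate from the case analysis above.
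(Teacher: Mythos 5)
Your proof is correct, and the unitriangular step is genuinely different from the paper's. For the permutation matrix, your identity $AB^{-1}A=\begin{pmatrix}0&1\\-1&0\end{pmatrix}$ checks out and is actually cleaner than the paper's $A^{-1}BA^{-1}\begin{pmatrix}-1&0\\0&-1\end{pmatrix}$ (length $3$ with no diagonal letter versus length $4$). For unitriangulars, the paper writes $b$ as a \emph{difference} of two squares, $b=r^2-s^2$ with the explicit formulas $r\equiv 2^{-1}(b+1)$ and $s\equiv 2^{-1}(b-1)\pmod p$, yielding the word $D_{r^{-1}}BD_r\,D_{s^{-1}}B^{-1}D_s$ of length $6$; you instead write $b$ as a \emph{sum} of two squares, $b=s^2+t^2$, justified by the pigeonhole count $|\{x^2\}|=|\{b-y^2\}|=(p+1)/2$, and build the length-$6$ word $D_sAD_s^{-1}D_tAD_t^{-1}$. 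The paper's route is fully explicit and constructive (no search needed); yours needs to locate a sum-of-two-squares representation, which exists but is not given by a closed formula, though in practice this is cheap and the quadratic-residue case analysis is slightly simpler than the paper's. One small point in your favor: when $b$ is a non-residue, a representation $b=s^2+t^2$ automatically has both $s$ and $t$ nonzero, so your diagonal conjugations are always well-defined in that branch; the paper's difference-of-squares formula can produce $r=0$ when $b\equiv -1$ and $-1$ is a non-residue, a degenerate case it does not explicitly address (though $\begin{pmatrix}1&0\\-1&1\end{pmatrix}=B^{-1}$ handles it trivially). You were right to flag and dispose of the degenerate $b=0$ and $b$-a-square cases separately, since $D_0$ is undefined.
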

\begin{proof}
    The permutation matrix may be written as the following product \[\begin{pmatrix}
        0&1\\-1&0
    \end{pmatrix}=A^{-1}BA^{-1}\begin{pmatrix}
        -1&0\\0&-1
    \end{pmatrix}.\]

    If $b$ is a quadratic residue modulo $p$ then $b=a^2$ for some $a$, in which case \[\begin{pmatrix}
        1&0\\b&1
    \end{pmatrix}=\begin{pmatrix}
        a^{-1}&0\\0&a
    \end{pmatrix}B\begin{pmatrix}
        a&0\\0&a^{-1}
    \end{pmatrix}.\]

    If $b$ is a quadratic non-residue modulo $p$ then we can write $b=r^2-s^2$ where $r\equiv 2^{-1}(b+1) \bmod p$ and $s\equiv 2^{-1}(b-1) \bmod p$, in which case
    \[\begin{pmatrix}
        1&0\\b&1
    \end{pmatrix}=\begin{pmatrix}
        1&0\\r^2&1
    \end{pmatrix}\begin{pmatrix}
        1&0\\s^2&1
    \end{pmatrix}^{-1}=\begin{pmatrix}
        r^{-1}&0\\0&r
    \end{pmatrix}B\begin{pmatrix}
        r&0\\0&r^{-1}
    \end{pmatrix}\begin{pmatrix}
        s^{-1}&0\\0&s
    \end{pmatrix}B^{-1}\begin{pmatrix}
        s&0\\0&s^{-1}
    \end{pmatrix}.\]

    For upper unitriangular matrices we transpose lower unitriangular matrices \[\begin{pmatrix}
        1&b\\0&1
    \end{pmatrix}=\begin{pmatrix}
        1&0\\b&1
    \end{pmatrix}^T,\] and note that since $A^T=B$ the above formulae give $\begin{pmatrix}
        1&b\\0&1
    \end{pmatrix}$ as a word of length at most $6$.
\end{proof}

\subsection{Factoring diagonal matrices}

This section shows how to factor a diagonal matrix $\begin{pmatrix}
    a&0\\0&a^{-1}
\end{pmatrix}$ as a word of length $O(\log(p))$ in $A, B, A^{-1}$ and $B^{-1}$.

We will need the following lemma.
\begin{lemma}\label{lem: ad}
Given a prime $p$ and an integer $0<a<p$, one can efficiently find an integer $d\equiv a^{-1} \bmod p$ such that $a$ and $d$ are coprime integers of similar magnitude. In particular, we find $d\sim p (\log p )(\log \log p)$ in the worst case and $d\sim p (\log \log p) (\log \log \log p)$ on average.
\end{lemma}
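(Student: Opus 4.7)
The plan is to compute $d_0 \in \{1,\dots,p-1\}$ with $d_0 \equiv a^{-1} \pmod p$ via the extended Euclidean algorithm, and then march through $d_0,\ d_0+p,\ d_0+2p,\dots$, returning the first term coprime to $a$. Since every positive integer congruent to $a^{-1} \pmod p$ is of this form, this is the natural greedy search, and coprimality of each candidate with $a$ can be tested in $O(\log p)$ integer arithmetic by another gcd computation.

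To analyze the length of the search, I would invoke the factoring assumption to write $a = q_1^{e_1}\cdots q_m^{e_m}$. Each $q_i \leq a < p$, so $\gcd(q_i,p)=1$, and hence $q_i \mid d_0 + kp$ cuts out a single residue class of $k$ modulo $q_i$, namely $k \equiv -d_0 p^{-1} \pmod{q_i}$. A valid $k$ is one that avoids all $m$ of these forbidden classes, and I would estimate the first such $k$ by a counting argument: among $k \in [0,K)$ the number that are bad is at most $\sum_{i=1}^m \lceil K/q_i \rceil$.

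For the worst-case bound I would combine the constraint $a < p$, which forces the number of distinct prime divisors to satisfy $m = O(\log p/\log\log p)$ via $\prod q_i \geq 2\cdot 3\cdots p_m \sim e^{m\log m}$, with Mertens' estimate $\sum_{q\mid a} 1/q = O(\log\log p)$, to show that a window of length $K$ of order $(\log p)(\log\log p)$ already contains a valid $k$; this gives $d = d_0 + kp \sim p(\log p)(\log\log p)$. For the average-case bound I would appeal to Erd\H{o}s--Kac to say that a typical $a$ has only $\log\log p$ distinct prime divisors, for which the density of valid $k$ is $\prod_{q\mid a}(1-1/q) \gg 1/\log\log\log p$ by Mertens applied to the largest prime divisor; the first valid $k$ then appears before $(\log\log p)(\log\log\log p)$, yielding the stated average $d \sim p(\log\log p)(\log\log\log p)$.

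The hardest part will be the worst-case estimate: the naive union bound $\sum 1/q_i$ is itself of order $\log\log p$, which is not less than one, so it does not directly force a valid $k$ to exist in a short window when $a$ is close to a primorial. To close this gap I expect one has to either perform a more careful sieve (Selberg or Brun) that exploits the cancellation in the inclusion--exclusion, or invoke Iwaniec-style bounds on the Jacobsthal function $g(a)$ together with the restriction $a < p$ to keep the search window at the promised $(\log p)(\log\log p)$ scale.
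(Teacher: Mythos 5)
Your plan is essentially the paper's: compute $d_0$ with the extended Euclidean algorithm, factor $a$, note that each prime $q_i\mid a$ forbids exactly one residue class of $k$ modulo $q_i$, and find a small $k$ outside all forbidden classes. Where you diverge is in how the existence of a small valid $k$ is established. The paper dispenses with the step you correctly flag as the crux by asserting, without further argument, that avoiding one class modulo each of $m$ distinct primes can always be done with $k < p_{m+1}$ (the $(m+1)$-th prime); it then combines $p_{m+1}\sim m\log m$ from the prime number theorem with the crude bound $m\le\log a\le\log p$ to get $k\lesssim(\log p)(\log\log p)$, hence $d=d_0+kp\sim p(\log p)(\log\log p)$. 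That one-line assertion is precisely a Jacobsthal-type claim, and is the same kind of statement you suspected the argument would have to rest on. You are right that the naive union bound $\sum_i \lceil K/q_i\rceil$ does not by itself force a valid $k$ into a window of the claimed size, so your diagnosis of where the difficulty lies is accurate; the paper simply does not fill that gap. One further small point: you use the sharper $m=O(\log p/\log\log p)$ where the paper settles for $m\le\log p$; plugging your bound into the paper's $p_{m+1}\sim m\log m$ would in fact tighten the worst-case estimate to $d=O(p\log p)$.
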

\begin{proof}
    First use the Extended Euclidean Algorithm to find $0<d<p$ such that $d\equiv a^{-1} \bmod p$. Now we find an integer $k$ such that $d+kp$ and $a$ are coprime. Let $q_1, ..., q_m$ be the distinct prime factors of $a$. We need to choose $k$ such that, for all $i$, we have $d+kp\not \equiv 0 \bmod q_i$. This means avoiding one congruence class modulo $q_i$ for each $i$, which can be done with $k<p_{m+1}$ where $p_m$ is the $m$th smallest prime.

    Since $m$ is the number of distinct prime divisors of $a$, we know that $m\le \log a\le \log p$, and on average $m \sim \log \log p$. By the Prime Number Theorem, $p_{m+1}\sim m \log m$. Therefore in the worst case $k\sim (\log p) (\log \log p)$, so one can find $k$ efficiently by sequential search.

    Then $a\sim p$ and $d+kp\sim p (\log p) (\log \log p)$.
\end{proof}

\begin{proposition}
    Let $a$ be an integer such that $1<a<p$. Assume that we can factor integers close to $p$ efficiently. Then there is an efficient algorithm which factors $\begin{pmatrix}
        a&0\\0&a^{-1}
    \end{pmatrix}$ as a word in $A$, $B$, $A^{-1}$ and $B^{-1}$ of length $O(\log p)$.
\end{proposition}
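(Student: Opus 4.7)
The plan is to follow the Tillich--Z\'emor framework adapted to diagonal matrices: lift $\begin{pmatrix} a & 0 \\ 0 & a^{-1} \end{pmatrix}$ to an integer matrix $M' \in SL_2(\mathbb{Z})$ with strictly positive entries of comparable magnitude, then invoke Proposition \ref{prop: Euclid} to factor $M'$ as a word of length $O(\log p)$ in $A$ and $B$. Reducing that word modulo $p$ yields the desired factorization of the diagonal matrix.

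First I would apply Lemma \ref{lem: ad} to produce an integer $d \equiv a^{-1} \bmod p$ with $\gcd(a, d) = 1$ and $d$ of size comparable to $p$; this step uses the integer-factorization assumption on the input $a$. Writing $ad = 1 + np$, I would seek
\[
M' = \begin{pmatrix} a + k_1 p & k_2 p \\ k_3 p & d + k_4 p \end{pmatrix}
\]
with $k_i \in \mathbb{Z}_{>0}$. Expanding $\det M' = 1$ and cancelling a factor of $p$ reduces to the single Diophantine condition $ak_4 + dk_1 + n = cp$, where $c := k_2 k_3 - k_1 k_4$.

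Next, following \cite{Tillich1994}, I would fix a prime $p'$ close to $p$ and set $k_3 = p'$. The requirement that $k_2 = (c + k_1 k_4)/p'$ be an integer, combined with the linear equation above, yields the quadratic congruence $d k_1^2 - (cp - n) k_1 - ac \equiv 0 \pmod{p'}$. By random search over $c$ one quickly finds a value making the discriminant $(cp - n)^2 + 4acd$ a quadratic residue modulo $p'$, and a square root is extracted efficiently. A solution $k_1$ then determines
\[
k_4 = \frac{cp - n - d k_1}{a}, \qquad k_2 = \frac{c + k_1 k_4}{p'}.
\]
To force $a$ to divide the numerator of $k_4$, I would replace $c$ by $c + z p'$ for a small residue $z \bmod a$; this is possible because $\gcd(a, p p') = 1$. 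Adding further multiples of $p'$ to $c$ then makes all four $k_i$ positive.

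The main obstacle is controlling magnitudes. The introduction flags that a naive lift can produce $k_i$ of vastly different sizes, defeating Proposition \ref{prop: Euclid} and leading to exponentially long words. The hard part of the proof is therefore to check that, with $d$ of size $p \cdot \mathrm{polylog}(p)$ coming from Lemma \ref{lem: ad}, with $p' \sim p$, and with an appropriately moderate choice of $c$ after the adjustment $c \mapsto c + zp'$, all four entries of $M'$ end up of magnitude roughly $p^2 \cdot \mathrm{polylog}(p)$ of the same order. Once this is verified, Proposition \ref{prop: Euclid} applies to $M'$ and gives a factorization as a word in $A, B$ of length $O(\log p)$, which reduces modulo $p$ to the claimed factorization of the diagonal matrix.
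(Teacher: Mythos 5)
Your proposal shares the paper's outer framework -- lift the diagonal matrix to an integer matrix $M'\in SL_2(\mathbb{Z})$ with balanced positive entries, then apply Proposition~\ref{prop: Euclid} and reduce modulo $p$, having first used Lemma~\ref{lem: ad} to replace $a^{-1}\bmod p$ by a coprime integer $d$ of comparable size -- but the core step of producing $k_1,\dots,k_4$ is genuinely different from what the paper does, and it is the step where your argument has a real gap. You follow the Tillich--Z\'emor collision route: set $k_3=p'$ a prime, derive the quadratic congruence $dk_1^2-(cp-n)k_1-ac\equiv 0\pmod{p'}$, solve it, and then patch $c\mapsto c+zp'$ to force $a\mid(cp-n-dk_1)$. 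This is precisely the ``naive extension'' that the paper flags in Section~2 as producing $k_i$ of vastly different magnitudes and hence exponentially long words. You acknowledge that the magnitude balance is ``the hard part'' and write ``once this is verified,'' but you never verify it -- and in fact it fails. The correction $c\mapsto c+zp'$ needs $z$ to range over residues modulo $a$, so for $a$ of order $p$ you are forced into $c\sim ap'\sim p^2$ rather than $c\sim p$. Then $k_4=(cp-n-dk_1)/a\sim p^2$ and $k_2=(c+k_1k_4)/p'\sim p^2$, while $k_1\bmod p'$ and $k_3=p'$ remain of order $p$. The resulting entries of $M'$ are of orders $p^2,p^3,p^2,p^3$, a ratio of $\sim p$, and the Euclidean algorithm then yields a quotient of size $\sim p$, i.e.\ an exponentially long word.

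The paper avoids this by \emph{not} introducing an auxiliary prime $p'$. Since $\gcd(a,d)=1$, it solves the linear congruence $n+ak_4+dk_1\equiv 0\pmod p$ directly (take $k_1\equiv -nd^{-1}\bmod a$ and $k_4\equiv(-n-dk_1)a^{-1}\bmod p$, then shift each by a random multiple of $a p$ resp.\ $p^2$ so that $k_1,k_4\sim p^2$), and only afterwards sets $k_2k_3 = k_1k_4 + (n+ak_4+dk_1)/p$. This product is $\sim p^4$ and is split into two factors $k_2,k_3\sim p^2$ by \emph{integer factorization} -- that is the step where the hypothesis ``we can factor efficiently'' is essential, not in Lemma~\ref{lem: ad} as your write-up suggests (the lemma does need the small prime divisors of $a<p$, but that is the easy use). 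With all four $k_i\sim p^2$ the four entries of $M'$ are genuinely of the same magnitude and Proposition~\ref{prop: Euclid} applies. So the missing idea in your proof is exactly the paper's contribution: replacing the fixed-prime/quadratic-residue trick, which cannot balance the entries when $a$ is large, by a direct linear solution followed by a factorization-based balanced split of $k_2k_3$.
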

\begin{proof}
    Let $d\equiv a^{-1} \bmod p$ such that $a$ and $d$ are coprime integers of similar magnitude, which we can find efficiently by Lemma~\ref{lem: ad}. 
    We will find $k_1, k_2, k_3, k_4$ such that $1=\det\begin{pmatrix}
        a+k_1p&k_2p\\k_3p&d+k_4p
    \end{pmatrix},$ and each matrix entry is of roughly the same magnitude.

    We must solve the Diophantine equation
    \[n+ak_4+dk_1=(k_1k_4-k_2k_3)p,\] so we will first solve $n+ak_4+dk_1 \equiv 0 \bmod p$, and then find suitable $k_2, k_3$.
    
    Since $d$ and $a$ are coprime, we may invert $d$ modulo $a$, and so find
    \begin{align*}
        k_1&\equiv -nd^{-1} \bmod a\\
        k_4&\equiv (-n-dk_1)d \bmod p.
    \end{align*}

    To ensure $a+k_1p, k_2p, k_3p, d+k_4p$ are all of similar magnitude, we will pick the $k_i$ to be of magnitude $\sim p^2$. We do this randomly because in some cases the Euclidean algorithm will fail, but succeeds after a few random tries. In our GAP code we use
    \begin{verbatim}
        k1:=-n/d mod a + a*p;
        k4:=(-n-d*k1)/a mod p + p*Random(source, 1, p);
    \end{verbatim}

    Once we have chosen $k_1, k_4$ we must choose $k_2, k_3$ of magnitude roughly $p^2$ such that \[k_2k_3=k_1k_4+\frac{n+ak_4+dk_1}{p},\]
    which we do by factoring, assuming we can do this efficiently.

     Then, since the matrix entries are all of magnitude $p^2$, Proposition~\ref{prop: Euclid} says we may use the Euclidean algorithm to write $\begin{pmatrix}
        a+k_1p&k_2p\\k_3p&d+k_4p
    \end{pmatrix}$ as a product of $A$ and $B$, thereby factoring our matrix $\begin{pmatrix}
        a&0\\0&a^{-1}
    \end{pmatrix}$ upon reduction modulo $p$.
\end{proof}

\begin{comment}
    
\section{Length of the factorization}
\eilidh{Use \cite[Theorem~4]{Tillich1994}. In the next section we will give results of experiments to see how it works in practice.}

The length of our word is the sum of quotients from our two Euclidean Algorithm applications. The problem is that the first quotient for upper triangular and last quotient for lower triangular matrices is disproportionately large, on the order of $p$. For those quotients, we note that they correspond to large powers of $A$ or $B$. Therefore, we seek to shorten large powers of $A$ or $B$ to words of length around $\log(p)$.

Note that 

\end{comment}

\section{Implementation and factorization length}

We used GAP \cite{GAP4} to implement our algorithm, and our code is included in an ancillary file in the arXiv version of this paper. The code is meant as a proof of concept, and the efficiency has not been fully optimized. We made the following choices in our implementation.

\begin{itemize}
    \item When implementing Tillich and Z{\'e}mor's algorithm, we try $100(\log p)^2$ times to find an appropriate value of $c$ before returning \verb!fail!, resulting in a small error rate. We tested our implementation for 1000 random $n$-bit primes and found that for $n>20$ the error rate vanishes.
    \item To factor one matrix, our algorithm factors several diagonal matrices. For each diagonal matrix we factor, we make at most 15 attempts to factor it as a word of length at most $1000 \log p$ in $A, B, A^{-1}, B^{-1}$, otherwise we take the shortest attempt.
\end{itemize}

\subsection{The length of the factorizations}

Proposition~\ref{prop: reduce to diagonal} writes $M$ as a word of length at most $6$ in $A, B, A^{-1}, B^{-1}$ and diagonal matrices. We use Tillich and Z{\'e}mor's algorithm to factor $A^{-1}$ and $B^{-1}$ as words of length $O(\log p)$ in $A, B$. When factoring a diagonal matrix $\begin{pmatrix}
    a&0\\0&d
\end{pmatrix}$, we use a function \verb!findIntMatrixD! which finds a matrix $\begin{pmatrix}
        a+k_1p&k_2p\\k_3p&d+k_4p
    \end{pmatrix}\in SL_2(\mathbb{Z})$ with entries $\sim p^2$. Assuming \verb!findIntMatrixD! produces uniformly random matrices with entries in $\{1, ..., p^2\}$, Proposition~\ref{prop: Euclid} finishes the factorization of a diagonal matrix as a word in $A$ and $B$ of length $O( \log p)$. In this case we should be able to factor any matrix as a word of length $O(\log p)$ in $A$ and $B$.

    In implementing \verb!findIntMatrixD!, we found that the outputs are not as close to uniformly random as we would like. This seems to be because we are looking for matrices whose entries are $\sim p^2$. If we were to instead search for a matrix whose entries are significantly larger, we would see more uniform outputs, at the expense of having to factor larger integers.
    
    With our implementation of \verb!findIntMatrixD! we tend to see some very large powers of $A$ and $B$ as factors, as well as the very small powers we would expect with true randomness. Therefore we use $A^{p-k}=(A^{-1})^k$ and $B^{p-k}=(B^{-1})^k$ where $0<k<\frac{p}{2}$, writing our diagonal matrices as words in $A, B, A^{-1}, B^{-1}$.

    We ran some experiments on a standard laptop computer running GAP, generating random primes $p$ of $10, 20, 40$ and $80$ bits, and random matrices in $SL_2(p)$ to factor. The prime $p$ is small only because we need to factor integers of size $\sim p^2$.
    
    Our experiments below show that our algorithm can factor a random matrix as a word in $A, B, A^{-1}, B^{-1}$ of length $O(\log p)$, which gives a word in $A, B$ of length $O((\log p)^2)$. We list here the word length divided by $(\log p)^2$.
\iffalse
\medskip
\begin{tabular}{clccc}\toprule
     Bits  &10& 20&  40& 80\\\midrule
  Number of tests&100& 100& 100& 100\\
  Success rate&94\%& 91\%& 91\%& 90\%\\
  Average length/$(\log p)^2$&2619.75& 18126.92& 4130.83& 9305.58\\
  Min length/$(\log p)^2$&126& 266& 362& 459\\ 
 Max length/$(\log p)^2$& 18754& 764691& 28375& 110458\\
 Average runtime (in ms)& 0.198& 2.39& 4.37& 194.73\\ \bottomrule
\end{tabular}

\fi

\vspace{1cm}

\begin{tabular}{clccc}\toprule
     Bits  &10& 20&  40& 80\\\midrule
  Number of tests&100& 100& 100& 100\\
%  Success rate&100\%& 100\%& 100\%& 100\%\\
  Average length/$(\log p)^2$&2222.50& 27571.24& 5009.12& 12807.78\\
  Min length/$(\log p)^2$&212& 311& 293& 348\\ 
 Max length/$(\log p)^2$& 28153& 691453& 87087& 575764\\
 Average runtime (in ms)& 2.18& 36.56& 138.91&10007.67\\ \bottomrule
\end{tabular}

\vspace{1cm}

\begin{comment}
\begin{tikzpicture}
\begin{axis}[
    title={[Success rate and bit size]},
    xlabel={[Number of bits]},
    ylabel={[Success rate (\%)]},
    xmin=0, xmax=100,
    ymin=80, ymax=100,
    xtick={0,20,40,60,80},
    ytick={80,85,90,95,100},
    legend pos=north west,
    ymajorgrids=true,
    grid style=dashed,
]

\addplot[
    color=blue,
    mark=square,
    ]
    coordinates {
    (10,94)(20,91)(40,91)(80,90)
    };
    
\end{axis}
\end{tikzpicture}
\end{comment}

\iffalse
\begin{tikzpicture}
\begin{axis}[
 title={[Word length/$(\log p)^2$ and bit size]},
    xlabel={[Number of bits]},
    ylabel={[Word length/$(\log p)^2$]},
    legend pos=north east,
    ymajorgrids=true,
    grid style=dashed,
    ymode=log,
    log ticks with fixed point,
    legend style={scale uniformly calculation=0.5}
    % for log axes, x filter operates on LOGS.
    % and log(x * 1000) = log(x) + log(1000):
    x filter/.code=\pgfmathparse{#1 + 6.90775527898214},
]
\addplot[
    color=green,
    mark=o,
    ]
    coordinates {
    (10,18754)(20,764691)(40,28375)(80,110458)
    };
    \addlegendentry{\tiny{Max length/$(\log p)^2$}}
\addplot[
    color=blue,
    mark=square,
    ]
    coordinates {
    (10,2619.75)(20,18126.92)(40,4130.83)(80,9305.58)
    };
    \addlegendentry{\tiny{Average length/$(\log p)^2$}}
\addplot[
    color=red,
    mark=triangle,
    ]
    coordinates {
    (10,126)(20,266)(40,362)(80,459)
    };
    \addlegendentry{\tiny{Min length/$(\log p)^2$}}
\end{axis}
\end{tikzpicture}
\fi
\begin{tikzpicture}
\begin{axis}[
 title={{Word length/$(\log p)^2$ and bit size}},
    xlabel={Size of $p$ in bits},
    ylabel={{Word length/$(\log p)^2$}},
    legend pos=outer north east,
    ymajorgrids=true,
    grid style=dashed,
    ymode=log,
    log ticks with fixed point,
    legend style={scale uniformly calculation=0.5}
    % for log axes, x filter operates on LOGS.
    % and log(x * 1000) = log(x) + log(1000):
    x filter/.code=\pgfmathparse{#1 + 6.90775527898214},
]
\addplot[
    color=green,
    mark=o,
    ]
    coordinates {
    (10,28153)(20,691453)(40,87087)(80,575764)
    };
    \addlegendentry{\tiny{Max length/$(\log p)^2$}}
\addplot[
    color=blue,
    mark=square,
    ]
    coordinates {
    (10,2222.5)(20,27571.24)(40,5009.12)(80,12807.78)
    };
    \addlegendentry{\tiny{Average length/$(\log p)^2$}}
\addplot[
    color=red,
    mark=triangle,
    ]
    coordinates {
    (10,212)(20,311)(40,293)(80,348)
    };
    \addlegendentry{\tiny{Min length/$(\log p)^2$}}
\end{axis}
\end{tikzpicture}

\vspace{1cm}

Our data and plots provide evidence that the word length grows like $O((\log p)^2)$.% It's important to note that the average is susceptible to big changes from outliers, as is evident with the large maximum value for 20-bit primes. 

\bibliography{bib}
\bibliographystyle{plain}

\end{document}